\documentclass[12pt]{article}

\usepackage{amsmath}
\usepackage{amsfonts}
\usepackage{latexsym}
\usepackage{amssymb}
\usepackage{stmaryrd}
\usepackage{overpic}
\usepackage{graphicx}

\newtheorem{theorem}{Theorem}[section]

\newtheorem{corollary}[theorem]{Corollary}

\newtheorem{lemma}[theorem]{Lemma}

\newenvironment{proof}[1][Proof]{\textbf{#1.} }
{\ \rule{0.75em}{0.75em}\smallskip}

\textwidth 6.7in \hoffset=-.65in \textheight=8.7in \voffset=-.75in
\parskip   1ex
\parsep    .5ex

\begin{document}
\begin{center}
\Large\bf Reliable and efficient a posteriori error estimates of DG methods for
a frictional contact problem\end{center}

\begin{center}
Fei Wang\footnote{Department of Mathematics, Pennsylvania State University, University Park, PA 16802, USA. School of Mathematics and Statistics, Huazhong University of Science and Technology, Wuhan 430074, China.
Email: wangfeitwo@163.com} \quad and \quad
Weimin Han\footnote{Department of Mathematics \& Program in
Applied Mathematical and Computational Sciences, University of
Iowa, Iowa City, Iowa 52242, USA.}
\end{center}

\bigskip
\begin{quote}
{\bf Abstract.} A posteriori error estimators are studied for
discontinuous Galerkin methods for solving a frictional contact
problem, which is a representative elliptic variational inequality
of the second kind. The estimators are derived by relating the error
of the variational inequality to that of a linear problem. 
Reliability and efficiency of the estimators are shown.

{\bf Keywords.} Elliptic variational inequality, discontinuous
Galerkin method, a posteriori error estimators, reliability, efficiency

{\bf AMS Classification.} 65N15, 65N30, 49J40
\end{quote}

\section{Introduction}

For more than three decades, adaptive finite element method (AFEM) has been
an active research field in scientific computing. As an efficient numerical approach, 
it has been widely used for solving a variety of differential equations. 
Each loop of AFEM consists of four steps,
\[{\rm Solve} \rightarrow {\rm Estimate} \rightarrow {\rm Mark} \rightarrow {\rm Refine}.\]
That is, in each loop, we first solve the problem on an mesh, then use
a posteriori error estimators to mark those elements to be refined, and finally,
refine the marked elements and get a new mesh. We can continue this process until 
the error satisfies certain smallness criterion.  The adaptive finite element method 
can achieve high accuracy with lower memory usage and less computation time.

A posteriori error estimators are computable quantities that
indicate the contribution of error on each element to the global
error. They are used in adaptive algorithms to indicate which elements
need to be refined or coarsened. To capture the true error as precisely as possible, 
they should have two properties: reliability and efficiency
(\cite{ainsworth00,babuska01}). Hence, obtaining reliable and
efficient error estimators is the key for successful adaptive algorithms. A
variety of different a posteriori error estimators have been
proposed and analyzed. Many error estimators can be classified as
residual type or recovery type (\cite{ainsworth00,babuska01}).
Various residual quantities are used to capture lost information
going from $u$ to $u_h$, such as residual of the equation,
residual from derivative discontinuity and so on. Another type of
error estimators is gradient recovery, i.e., $||G(\nabla u_h)-\nabla u_h||$ is used to
approximate $||\nabla u-\nabla u_h||$, where a recovery operator
$G$ is applied to the numerical solution $u_h$ to rebuild the
gradient of the true solution $u$. A posteriori error analysis
have been well established for
standard finite element methods for solving linear partial
differential equations, and we refer the reader to
\cite{ainsworth00,babuska01,verfurth96}.

Due to the inequality feature, it is more difficult to develop a
posteriori error estimators for variational inequalities (VIs).
However, numerous articles can be found on a posteriori error
analysis of finite element methods for the obstacle problem, which is
an elliptic variational inequality (EVI) of the first kind, e.g.,
\cite{bartels04,chen00,kornhuber96,nochetto03,veeser01,yan01}. In
\cite{braess05}, Braess demonstrated that a posteriori error
estimators for finite element solutions of the obstacle problem
can be derived by applying a posteriori error estimates
for an associated linear elliptic problem. For VIs of the second
kind, in \cite{bostan1,bostan2,bostan3,bostan4}, the authors
studied a posteriori error estimation and established a framework
through the duality theory, but the efficiency was not completely
proved. In \cite{wang12}, the ideas in \cite{braess05} were extended 
to give a posteriori error analysis for VIs of the second kind. Moreover, 
a proof was provided for the efficiency of the error estimators.

In recent years, thanks to the flexibility in constructing
feasible local shape function spaces and the advantage to capture
non-smooth or oscillatory solutions effectively, discontinuous Galerkin (DG) methods 
have been widely used for solving various types of partial differential
equations. When applying $h$-adaptive algorithm with standard finite
element methods, one needs to choose the mesh refinement rule
carefully to maintain mesh conformity and shape regularity. In
particular, hanging nodes are not allowed without special treatment. For discontinuous
Galerkin methods, the approximate functions are allowed to be
discontinuous across the element boundaries, so general meshes
with hanging nodes and elements of different shapes are accepted.
Advantages of DG methods include the flexibility of
mesh-refinements and construction of local shape function spaces
($hp$-adaptivity), and the increase of locality in discretization,
which is of particular interest for parallel computing. A
historical account of DG methods' development can be found in
\cite{cockburn00}. In \cite{arnold00,arnold02}, Arnold et al.\ established a unified 
error analysis of nine DG methods for elliptic problems and several articles provided 
a posteriori error analysis of DG methods for elliptic problems 
(e.g.\ \cite{becker03,cai11,castillo05,karakashian03,lazarov08,riviere03}).
Carstensen et al.\ presented a unified approach to a posteriori
error analysis for DG methods in \cite{carstensen09}. In
\cite{wang10}, the authors extended ideas of the unified framework about DG methods for
elliptic problems presented in \cite{arnold02} to
solve the obstacle problem and a simplified frictional contact problem,
and obtained a priori error estimates, which reach
optimal order for linear elements. In \cite{wang11}, reliable a
posteriori error estimators of the residual type were derived for
DG methods for solving the obstacle problem, and efficiency of the
estimators is theoretically explored and numerically confirmed. 
A posteriori error analysis of DG methods for the obstacle problem was also studied 
in \cite{gudi13}. 

In this paper, we study a posteriori error estimates of DG methods
for solving a frictional contact problem. 
The paper is organized as follows: in Section 2 we introduce a frictional contact problem and the DG
schemes for solving it. Then we derive a reliable residual type a posteriori error estimators for the DG methods of a frictional contact problem in Section 3. Finally, we prove efficiency of the proposed error estimators in Section 4. 

\section{A frictional contact problem and DG formulations}
\setcounter{equation}0

\subsection{A frictional contact problem}.

Let $\Omega\subset\mathbb{R}^d$ ($d = 2, 3$) be an open bounded domain with
Lipschitz boundary $\Gamma$ that is divided
into two mutually disjoint parts, i.e., $\Gamma=\Gamma_1\cup \Gamma_2$. 
Here $\Gamma_1$ is a relatively closed subset
of $\Gamma$, and $\Gamma_2=\Gamma\backslash \Gamma_1$. Given
$f\in L^2(\Omega)$ and a constant $g>0$, the frictional contact problem
is: find $u\in V = H^1_{\Gamma_1}(\Omega):=\{v\in H^1(\Omega): v=0 \ {\rm
a.e.\ on}\ \Gamma_1 \}$ such that
\begin{equation}\label{vi}
a(u,v-u) + j(v) - j(u) \geq (f,v-u)\quad\forall\,v\in V,
\end{equation}
where $(\cdot,\cdot)$ denotes the $L^2$ inner product in
the domain $\Omega$ and
\begin{align*}
a(u,v)&=\int_\Omega \nabla u\cdot\nabla v\, dx + \int_\Omega u\,  v\, dx,\\
j(v) &= \int_{\Gamma_2} g\, |v|\, ds.
\end{align*}

The frictional contact problem is an example of elliptic
variational inequalities of the second kind and has a unique
solution $u\in V$ (\cite{duvaut76,glowinski84}).
Moreover, there exists a unique Lagrange multiplier $\lambda\in
L^\infty(\Gamma_2)$ such that
\begin{align}
a(u,v) + \int_{\Gamma_2} g\, \lambda\, v\, ds = (f,v)\quad\forall\,v\in V,\label{lvi}\\
|\lambda|\leq 1, \quad \lambda\, u =|u| \quad {\rm a.e.\ on}\
\Gamma_2.\label{lag_1}
\end{align}
From \eqref{lvi} and \eqref{lag_1}, we know that the solution
$u$ of \eqref{vi} is the weak solution of the following boundary value
problem
\begin{align*}\label{pde1}
 -\triangle u +u &= f  \;\quad\quad {\rm in}\ \Omega,\\
 u &=0 \;\quad\quad {\rm on}\ \Gamma_1,\\
\nabla u \cdot \boldsymbol{n} &= -g\lambda \quad  {\rm on}\ \Gamma_2,
\end{align*}
where $\boldsymbol{n}$ is the unit outward normal vector. For any $v\in V$, set
\begin{equation*}
    \ell(v) = \int_\Omega f\, v\, dx - \int_{\Gamma_2} g\, \lambda\, v\, ds.
\end{equation*}
Then we have by \eqref{lvi}
\begin{equation}\label{eq1}
    a(u,v)=\ell(v) \quad \forall\, v\in V.
\end{equation}

Similar with the argument in \cite{wang12}, given a triangulation $\mathcal{T}_h$ of $\Omega$, for a Lipschitz subdomain $\omega\subset\Omega$, define
\[ a_{\omega,h}(v,w):=\sum_{K\in \mathcal{T}_h}\int_{\omega\cap K} (\nabla v\cdot \nabla w + vw)\, dx \]
and
\[ \|v\|_{1,\omega,h}:= a_{\omega,h}(v,v)^{1/2}. \]
Then define
\begin{equation}
 |\lambda|_{*,\gamma,h}:=\sup\left\{\int_{\gamma} g\, \lambda\,v\, ds : \; v\in
H^1_h(\omega), \; \|v\|_{1,\omega,h}=1\right\}, \label{normg}
\end{equation}
where $\gamma\subset \partial\omega\cap\Gamma_2$ is a measurable subset and 
$H^1_h(\omega) = \{v\in L^2(\omega): v|_{K\cap\omega}\in H^1(K\cap\omega)\}$. 
If $\omega=\Omega$ and $\gamma=\Gamma_2$, the subscript $\omega$ and $\gamma$ are omitted.
We have
\begin{equation}\label{dual_norm}
|\lambda|_{*,\gamma,h}=\|w\|_{1,\omega,h},
\end{equation}
where $w\in H^1_h(\omega)$ is the solution of the following auxiliary equation
\begin{equation}\label{dual_norm1}
a_{\omega,h}(w,v)=\int_{\gamma} g\ \lambda\ v\, ds \quad \forall\,
v\in H^1_h(\omega).
\end{equation}
The formula \eqref{dual_norm} can be proved by an argument similar to that found in \cite{wang12}.

\subsection{Discontinuous Galerkin formulations}

First, we introduce some notations. Let $\{\mathcal{T}_h\}$ be a family of
triangulations of $\overline{\Omega}$ such
that the minimal angle condition is satisfied. For a triangulation
$\mathcal {T}_h$, let $\mathcal{E}_h$ be the set of all
edges, $\mathcal{E}^i_h \subset \mathcal{E}_h$ the set of all interior edges, 
$\mathcal{E}^b_h := \mathcal{E}_h\backslash
\mathcal{E}^i_h$ the set of all boundary edges,
$\mathcal{E}^0_h \subset \mathcal{E}_h$ the set of all edges not lying on $\Gamma_2$, 
$\mathcal{E}^1_h := \mathcal{E}_h^0 \backslash
\mathcal{E}^i_h$, $\mathcal{E}^2_h := \mathcal{E}_h \backslash
\mathcal{E}_h^0$, and define ${\cal E}(K)$ as the set
of sides of $K$. Let $h_K ={\rm diam}(K)$ for $K\in\mathcal
{T}_h$, $h_e = {\rm length}(e)$ for $e\in\mathcal{E}_h$, and
$\mathcal{N}_h$ denote the set of nodes of $\mathcal{T}_h$. For
any element $K\in{\cal T}_h$, define the patch set $\omega_K
:=\cup \{T \in \mathcal{T}_h, \, T \cap K \neq \O \}$, and for any
edge $e$ shared by two elements $K^+$ and $K^-$, define
$\omega_e:=K^+\cup K^-$. For a scalar-valued function $v$ and a
vector-valued function $\boldsymbol{q}$, let $v^i$ = $v|_{\partial K^i}$, $\boldsymbol{q}^i$
= $\boldsymbol{q}|_{\partial K^i}$, and $\boldsymbol{n}^i = \boldsymbol{n}|_{\partial K^i}$ be the unit
normal vector external to $\partial K^i$ with $i=\pm$. Define
the average $\{\cdot\}$ and the jump $[\cdot]$ on an interior edge $e\in{\cal
E}_h^i$ as follows:
\begin{align*}
\{v\} &= \frac{1}{2}(v^+ + v^-),\quad  [v] = v^+\boldsymbol{n}^+
   + v^-\boldsymbol{n}^-,\\
\{\boldsymbol{q}\} &= \frac{1}{2}(\boldsymbol{q}^+ + \boldsymbol{q}^-),\quad
 [\boldsymbol{q}] = \boldsymbol{q}^+\cdot \boldsymbol{n}^+ + \boldsymbol{q}^-\cdot \boldsymbol{n}^-.
\end{align*}
For a boundary edge $e \in \mathcal{E}^b_h$, we let
\[ [v] = v\boldsymbol{n},\quad\{q\}=q,\]
where $\boldsymbol{n}$ is the outward unit normal.

Let us define the following linear finite element spaces
\begin{align*}
 V_h&=\{ v_h \in L^2(\Omega):\;v_h |_K \in P_1(K)\ \forall\,K\in\mathcal{T}_h\},\\
 W_h&=\{ \boldsymbol{w}_h\in[L^2(\Omega)]^2:\; \boldsymbol{w}_h |_K\in [P_1(K)]^2\
 \forall\,K\in\mathcal{T}_h\}.
\end{align*}
We denote by $\nabla_h$ the broken gradient whose restriction on each
element $K \in \mathcal{T}_h$ is equal to $\nabla$. Define some
seminorms and norms by the following relations:
\[\| v\|_{K}^2 =\int_K v^2  dx, \quad |v|^2_{1,K}=\|\nabla
v\|_{K}^2, \quad  \|v\|_{e}^2 =\int_e v^2 ds,\]
\[\|v\|^2_{0,h} = \sum_{K\in \mathcal{T}_h} \|v\|^2_{K}, 
\quad |v|^2_{1,h} = \sum_{K\in \mathcal{T}_h} |v|^2_{1,K},
\quad \|v\|^2_{1,h} = \|v\|^2_{0,h} + |v|^2_{1,h}.\]

Throughout this paper, ``$\lesssim\cdots $" stands for ``$\leq
C\cdots $", where $C$ denotes a generic positive constant
dependent on the minimal angle condition but not on the element
sizes, which may take different values at different occurrences.

Now, let us introduce the Discontinuous Galerkin methods for solving the variational inequality \eqref{vi}.
Here, we take the local DG method (LDG) as an example to show how to derive a
posteriori error estimators of DG methods for solving the frictional
contact problem (\ref{vi}). The derivation and analysis for the LDG method in this paper can be extended to
other DG methods studied in \cite{wang10}. The LDG method
(\cite{cockburn98}) for solving the frictional contact problem is to find $u_h \in V_h$ such that
\begin{equation}\label{dvi}
B_h(u_h,v_h-u_h) + j(v_h) - j(u_h) \geq (f,v_h-u_h)\quad\forall\,
v_h \in V_h,
\end{equation}
where
\begin{align}
B_h(u,v)&=\int_\Omega \left(\nabla_h u\cdot\nabla_h v + u \,v\right) dx
  -\int_{{\cal E}_h^0}[u] \cdot \{\nabla_h v\}\, ds
  -\int_{{\cal E}_h^0} \{\nabla_h u\}\cdot[v]\, ds \nonumber \\
 &{}\quad - \int_{{\cal E}_h^i} \beta \cdot[u] [\nabla_h v]\, ds
   - \int_{{\cal E}_h^i} [\nabla_h u] \beta\cdot [v]\, ds \nonumber \\
   &{}\quad+  \left(r_0([u]) + l(\beta\cdot[u]),r_0([v])
   + l(\beta\cdot [v])\right) + \alpha^j_0(u,v). \label{ldg}
\end{align}
Here $\beta \in [L^2({\cal E}_h^i)]^2$
is a vector-valued function which is constant on each edge of $\mathcal{E}^i_h$, and
$\alpha^j_0(u,v) = \int_{{\cal E}_h^0} \eta [u]\cdot [v]\, ds$ is
the penalty term with the penalty weighting
function $\eta : {\cal E}_h^0 \rightarrow \mathbb{R}$ given by
$\eta_e h_e^{-1}$ on each $e \in {\cal E}_h^0$, $\eta_e$ being a
positive number on $e$. For any $\boldsymbol{w}_h \in W_h$, the lifting operators $r_0 : [L^2({\cal E}_h^0)]^2 \rightarrow
W_h$ and $l : L^2({\cal E}_h^i) \rightarrow W_h$ are defined by
\begin{equation}\label{liftop} 
\int_\Omega r_0(\boldsymbol{q})\cdot \boldsymbol{w}_h dx 
= -\int_{{\cal E}_h^0} \boldsymbol{q}\cdot \{\boldsymbol{w}_h\}\, ds, \quad \int_\Omega
l(v)\cdot \boldsymbol{w}_h  dx = -\int_{{\cal E}_h^i} v\, [\boldsymbol{w}_h]\, ds
\quad\forall\,\boldsymbol{w}_h\in W_h.
\end{equation}

The bilinear form $B_h$ is continuous and elliptic with respect to certain DG-norm, and
therefore, in particular, the discrete problem has a
unique solution $u_h\in V_h$ (see \cite{arnold02, wang10}).
Similar to the continuous problem, there exists a unique Lagrange
multiplier $\lambda_h\in L^\infty(\Gamma_2)$ such that
(\cite{glowinski84})
\begin{align}
B_h(u_h,v_h) + \int_{\Gamma_2} g\, \lambda_h v_h ds = (f,v_h) 
\quad\forall\,v_h\in V_h,
\label{ldvi}\\
|\lambda_h|\leq 1, \quad \lambda_h u_h =|u_h| \quad {\rm a.e.\ on}\ \Gamma_2.
\label{lag_2}
\end{align}
For any $v_h\in V_h$, let
\begin{equation*}
    \ell_h(v_h) = (f,v_h) - \int_{\Gamma_2} g\, \lambda_h v_h ds.
\end{equation*}
Then \eqref{ldvi} becomes
\begin{equation}\label{eq2}
    B_h(u_h,v_h)=\ell_h(v_h) \quad \forall\; v_h\in V_h.
\end{equation}
For any $v\in V$, we know that $[u]=0$ and $[v]=0$ on $e\in {\cal E}_h^0$.  
Then we have from \eqref{lvi} that
\begin{align}
    B_h(u,v) = a (u,v) = \ell(v) \quad  \forall\, v \in V \label{dg_vi}
\end{align}
Obviously, $u_h$ is also the finite element approximation of the
solution $z\in V$ of the linear problem:
\begin{equation}\label{dg_vi2}
B_h(z,v) = \ell_h(v) \quad \forall\, v \in V,
\end{equation}
which is the weak formulation of the boundary value problem
\begin{align}
 -\Delta z + z &= f  \quad\quad\quad {\rm in}\ \Omega,\label{pde2}\\
z &=0 \quad\quad\quad {\rm on}\  \Gamma_1,\nonumber\\
\frac{\partial z}{\partial n} &= -g\lambda_h  \quad\  {\rm on}\
\Gamma_2.\nonumber
\end{align}

\subsection{A bridge between $u_h-u$ and $u_h-z$}

Next, we relate the error $e:=u_h-u$ to $u_h-z$, namely,
\begin{align}
\|e\|_{1,h} + |\lambda-\lambda_h|_{*,h}  \lesssim \|u_h-z\|_{1,h} +
\left(\sum_{e\in{\cal E}_h^0} h_e^{-1}\|[u_h]\|_e^2\right)^{1/2}. \label{post_obst}
\end{align}
Then we will use this relation to derive a posteriori error estimators for DG solutions 
of the frictional contact problem by utilizing a posteriori error estimators of the 
related linear elliptic problem \eqref{pde2}. Note that a similar approach can be 
applied to other elliptic variational inequalities of the second kind.

To derive the inequality \eqref{post_obst}, we first define a continuous piecewise linear 
function in $V_h \cap H^1_{\Gamma_1}(\Omega)$, whose value is close to the numerical solution. 
For any given $v_h \in V_h$, written $v_h=\sum_{K\in \mathcal{T}_h}
\sum_{j=1}^3 \alpha_K^{(j)}\phi_K^{(j)}$, where $\phi_K^{(j)}$, $1\le j\le 3$,
are the linear basis functions corresponding to the three vertices of $K$,
we construct a function $\chi \in V_h \cap H^1_{\Gamma_1}(\Omega)$ as follows: 
At every interior node and the nodes on $\Gamma_2$ of the conforming mesh
$\mathcal{T}_h$, the value of $\chi$ is set to be the average of
the values of $v_h$ computed from all the elements sharing that
node, and $\chi=0$ at the boundary nodes on $\Gamma_1$. For each
$\nu\in \mathcal{N}_h$, let $\omega_\nu=\{K\in
\mathcal{T}_h:\,\nu\in K\}$ and denote its cardinality by
$|\omega_\nu|$, which is bounded by a constant depending only on
the minimal angle condition of the mesh. To each node $\nu$, the
associated basis function $\phi^{(\nu)}$ is given by
\[{\rm supp}\phi^{(\nu)} = \bigcup_{K\in \omega_\nu}K,
\quad \phi^{(\nu)}|_K = \phi_K^{(j)} \ {\rm for}\ x_K^{(j)} =\nu.\] 
Then we define $\chi \in V_h \cap H^1_{\Gamma_1}(\Omega)$ by
\begin{align}\label{chi}
\chi=\sum_{\nu\in \mathcal{N}_h} \beta^{(\nu)}\phi^{(\nu)}, \quad {\rm where} \;
\beta^{(\nu)}=\frac{1}{|\omega_\nu|}\sum_{x_K^{(j)}=\nu}\alpha_K^{(j)}\quad
{\rm if}\ \nu\in\mathcal{N}_h\ {\rm and\ }\nu\not\in\Gamma_1.
\end{align}
For nonconforming meshes, let $\mathcal{N}^0_h$ be the set of all
hanging nodes. Then we construct $\chi$ from $v_h$ same as conforming mesh case 
on all the nodes $\nu\in\mathcal{N}_h\backslash\mathcal{N}^0_h$.
For an upper bound of the error $v_h - \chi$, we quote a result from
\cite{karakashian03} (which is Theorem 2.2 there for conforming meshes;
the same result also holds for nonconforming meshes, which is Theorem 2.3 in \cite{karakashian03}).

\begin{lemma}\label{lem:conform}
Let $\mathcal{T}_h$ be a conforming triangulation.
Then for any $v_h \in V_h$, we can construct a continuous function $\chi \in V_h \cap
H^1_{\Gamma_1}(\Omega)$ from $v_h$, such that
\begin{equation}\label{conform}
\sum_{K\in \mathcal{T}_h}\|v_h-\chi\|_{i,K}^2 \leq C \sum_{e\in
{\cal E}_h^0} h_e^{1-2i}\|[v_h]\|_e^2,\quad i=0,1,
\end{equation}
where the constant $C$ is independent of mesh size and $v_h$ but
which may depend on the lower bound of the minimal angle of the elements in ${\cal T}_h$.
\end{lemma}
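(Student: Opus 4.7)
The plan is to reduce everything to a bound on the nodal values of $v_h-\chi$. On any element $K$, the difference $v_h-\chi$ is affine, so by equivalence of norms on $P_1(K)$ together with the minimal-angle condition one has
\[
\|v_h-\chi\|_{0,K}^2 \lesssim h_K^2\sum_{j=1}^3 \bigl|(v_h-\chi)(x_K^{(j)})\bigr|^2,\qquad
|v_h-\chi|_{1,K}^2 \lesssim \sum_{j=1}^3 \bigl|(v_h-\chi)(x_K^{(j)})\bigr|^2.
\]
Hence the entire task is to control $|\alpha_K^{(\nu)}-\beta^{(\nu)}|$ (resp.\ $|\alpha_K^{(\nu)}|$ when $\nu\in\Gamma_1$) by jumps $\|[v_h]\|_e$ with $e\in\mathcal{E}_h^0$.

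For a node $\nu\notin\Gamma_1$, the definition of $\beta^{(\nu)}$ gives
\[
\alpha_K^{(\nu)}-\beta^{(\nu)} \;=\; \frac{1}{|\omega_\nu|}\sum_{K'\in\omega_\nu}\bigl(\alpha_K^{(\nu)}-\alpha_{K'}^{(\nu)}\bigr),
\]
so I would order the elements of $\omega_\nu$ in a cycle around $\nu$ (or a path, near the boundary) in which consecutive elements share an edge $e$ with $\nu$ as an endpoint, and write each pairwise difference as a telescoping sum of one-sided traces at $\nu$ of jumps $[v_h]|_e$ along that chain. For each such edge $e\in\mathcal{E}_h^0$, the fact that $[v_h]|_e$ is affine yields the standard inverse-type estimate $|[v_h](\nu)|^2 \lesssim h_e^{-1}\|[v_h]\|_e^2$. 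For a boundary node $\nu\in\Gamma_1$, one has $\chi(\nu)=0$, and the same chain argument connects $K$ through interior edges incident to $\nu$ to a neighbor having an edge on $\Gamma_1$ through $\nu$; since $\Gamma_1$-edges lie in $\mathcal{E}_h^0$ and there the "jump" is $v_h\boldsymbol{n}$, this supplies the missing bound on $\alpha_K^{(\nu)}$ itself.

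Summing the nodal bounds over $j=1,2,3$ and over $K\in\mathcal{T}_h$, every edge $e\in\mathcal{E}_h^0$ appears only in the contributions of elements in a patch of bounded size (again by the minimal-angle condition), so the multiplicity is absorbed in the generic constant $C$. Using the shape regularity $h_K\simeq h_e$ whenever $e$ is an edge of $K$, the exponents match and produce the claimed right-hand side $\sum_{e\in\mathcal{E}_h^0}h_e^{1-2i}\|[v_h]\|_e^2$ for $i=0,1$. For a nonconforming mesh the construction of $\chi$ ignores hanging nodes in $\mathcal{N}_h^0$; their contribution is handled by an additional trace-inverse estimate on the edge of the coarser neighbor on which the hanging node sits, as in Theorem~2.3 of \cite{karakashian03}.

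The main obstacle is the combinatorial bookkeeping in the telescoping argument: producing uniform constants for the chains around each node $\nu$ while correctly treating the two special cases (nodes on $\Gamma_1$ and hanging nodes) so that only jumps across edges of $\mathcal{E}_h^0$ enter the final bound. Since this is exactly the content of Theorem~2.2/2.3 of \cite{karakashian03}, in the actual write-up I would simply cite that result rather than reproduce the details.
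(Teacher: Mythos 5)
Your sketch is exactly the standard averaging/telescoping argument (nodal-value equivalence on $P_1(K)$, chain of jumps around each node, edge inverse estimate, bounded patch multiplicity) that underlies Theorem~2.2/2.3 of \cite{karakashian03}, and the paper itself offers no independent proof but simply quotes that result, as you also propose to do in the final write-up. So the proposal is correct and takes essentially the same approach as the paper.
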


Now, let us derive the inequality \eqref{post_obst}. From (\ref{dg_vi}) and (\ref{dg_vi2}), 
for all $v\in V$, we have
\begin{align*}
B_h(u_h-u,v) & = B_h(u_h-z,v) + B_h(z-u,v) = B_h(u_h-z,v) +
\int_{\Gamma_2} g(\lambda-\lambda_h)v\,ds.
\end{align*}
By the definition (\ref{ldg}) and noticing $[v]=0$ on each $e \in {\cal E}_h^0$, the above
equation becomes
\begin{align*}
& \widetilde{a}(e,v) - \int_{{\cal E}_h^0} [e]\cdot\{\nabla_h v\}\, ds 
 - \int_{{\cal E}_h^i}\beta \cdot[e] [\nabla_h v]\, ds\\
&\qquad =\widetilde{a}(u_h-z, v) -\int_{{\cal E}_h^0} [u_h-z]\cdot\{\nabla_h v\}\, ds\\ 
&{}\qquad \quad - \int_{{\cal E}_h^i}\beta \cdot[u_h-z] [\nabla_h v]\, ds + \int_{\Gamma_2}
g(\lambda-\lambda_h)v\,ds,
\end{align*}
where 
\[\widetilde{a}(u,v) = \int_\Omega \left(\nabla_h u\cdot\nabla_h v+ u\,v\right) dx.\]
Then,
\begin{align*}
\widetilde{a}(e, v) &= \widetilde{a}(u_h-z, v)
 - \int_{{\cal E}_h^0} [u-z]\cdot \{\nabla_h v\} ds
 - \int_{{\cal E}_h^i} \beta\cdot[u-z] [\nabla_h v]\, ds
+ \int_{\Gamma_2} g(\lambda-\lambda_h)v\,ds.
\end{align*}
Note that $[u-z]=0$ on each $e \in {\cal E}_h^0$. We have
\begin{equation}
\widetilde{a}(e, v)= \widetilde{a}(u_h-z, v) + \int_{\Gamma_2}
g(\lambda-\lambda_h)v\,ds. \label{equ1}
\end{equation}
Let $\chi\in V_h \cap H^1_{\Gamma_1}(\Omega)$ be the function
constructed from $u_h$, satisfying (\ref{conform}) for $v_h=u_h$.
Taking $v:=\chi-u=\chi-u_h+u_h-u$ in (\ref{equ1}) and using
Cauchy-Schwarz inequality, we have
\begin{align}
\|e\|^2_{1,h}  & \le \|u_h-z\|_{1,h} \left(\|\chi-u_h\|_{1,h} +\|e\|_{1,h}\right)
+ \|e\|_{1,h} \|\chi-u_h\|_{1,h} + \int_{\Gamma_2}g(\lambda-\lambda_h)(\chi-u)\,ds
\nonumber\\
& =\|e\|_{1,h}\left(\|u_h-z\|_{1,h}+\|\chi-u_h\|_{1,h}\right) + \|u_h-z\|_{1,h} \|\chi-u_h\|_{1,h} \nonumber\\
& {}\quad  + \int_{\Gamma_2} g(\lambda-\lambda_h)(\chi-u)\,ds\nonumber\\
& \le \frac{1}{2}\|e\|^2_{1,h} +\frac{1}{2}\left(\|u_h-z\|_{1,h}+\|\chi-u_h\|_{1,h}\right)^2
   + \|u_h-z\|_{1,h} \|\chi-u_h\|_{1,h} \nonumber\\
& {}\quad  + \int_{\Gamma_2}g(\lambda-\lambda_h)(\chi-u)\,ds.
\label{equ2}
\end{align}
Note that by \eqref{lag_1} and \eqref{lag_2}, we have
\begin{align*}
    \int_{\Gamma_2} g(\lambda- \lambda_h) (u_h-u)\, ds & =\int_{\Gamma_2} g \ \lambda \ u_h\,
    ds - \int_{\Gamma_2} g \ \lambda \ u\,ds -\int_{\Gamma_2} g \ \lambda_h \
    u_h\,ds + \int_{\Gamma_2} g \ \lambda_h \ u\,ds\\
    & \leq \int_{\Gamma_2} g \ |u_h|\,
    ds - \int_{\Gamma_2} g \ |u|\,ds -\int_{\Gamma_2} g \ |u_h|\,ds + \int_{\Gamma_2} g \
    |u|\,ds = 0.
\end{align*}
and
\begin{align*}
    \int_{\Gamma_2} g(\lambda- \lambda_h) (\chi-u_h)\, ds
    & \leq |\lambda- \lambda_h|_{*,h} \|\chi-u_h\|_{1,h}\\
    & \leq \epsilon |\lambda- \lambda_h|_{*,h}^2 + \frac{1}{4\epsilon}\|\chi-u_h\|_{1,h}^2.
\end{align*}
Hence,
\begin{align}
\|e\|^2_{1,h} & \lesssim \|u_h-z\|^2_{1,h} + \|\chi-u_h\|^2_{1,h}
+ \epsilon |\lambda- \lambda_h|_{*,h}^2. \label{err1}
\end{align}
Recalling (\ref{dual_norm}), we have
\[|\lambda-\lambda_h|_{*,h} = \|u-z\|_{1,h} \leq \|e\|_{1,h} + \|u_h-z\|_{1,h}.\]
Then, we obtain the following result
\begin{align*}
\|e\|_{1,h} + |\lambda-\lambda_h|_{*,h} & \lesssim \|u_h-z\|_{1,h} +
\|\chi-u_h\|_{1,h}.
\end{align*}
Using \eqref{conform} to bound $\|\chi-u_h\|_{1,h}$, the above inequality can be rewritten as
\begin{align}
\|e\|_{1,h} + |\lambda-\lambda_h|_{*,h} & \lesssim \|u_h-z\|_{1,h} +
\left(\sum_{e\in{\cal E}_h^0} h_e^{-1}\|[u_h]\|_e^2\right)^{1/2}.\label{bridge}
\end{align}

The relation \eqref{bridge} serves as a starting point for derivation of reliable and efficient 
error estimators of DG methods for a frictional contact problem. In this paper, we focus on 
the derivation and analysis of residual type error estimators derived from the 
inequality \eqref{bridge}. A similar approach can also be applied to recovery type error estimators.

\section{Reliable residual-type estimators}
\setcounter{equation}0

Now we follow the ideas in \cite{wang12} to obtain a posteriori
error estimators of DG methods for solving the frictional contact
problem. The detailed derivation and analysis of a posteriori
error estimators is given for the LDG method \cite{cockburn98}.
For other DG methods discussed in \cite{wang10}, similar 
results could be obtained by similar arguments.

To bound the first term $\|u_h-z\|_{1,h}$, we recall one result in
\cite{carstensen09}. Note that the a posteriori error analysis in \cite{carstensen09}
was only for the Poisson problem with homogenous Dirichlet boundary condition,
but it is easy to extend the result to general elliptic problems with Neumann
boundary conditions. For the second-order elliptic problem
\[-\Delta u + u = f \ {\rm in}\ \Omega, \quad u=0 \ {\rm on}\ \Gamma_1, 
\quad \frac{\partial u}{\partial n}=g \ {\rm on}\ \Gamma_2,\]
rewrite it as the first order system
\begin{equation}\label{Pois_first}
p = \nabla u, \ -\nabla \cdot p + u = f \ {\rm in} \ \Omega, \quad
u=0 \ {\rm on}\ \Gamma_1, \quad \frac{\partial u}{\partial n}=g \
{\rm on}\ \Gamma_2.
\end{equation}
Then the DG formulation for this problem is
\begin{align}
&\int_\Omega p_h \cdot \tau_h dx = - \int_\Omega u_h\,\nabla_h\cdot\tau_h dx 
+ \sum_{K\in\mathcal{T}_h}\int_{\partial K} \hat{u_h}\,n_K\cdot\tau_h ds
&\forall \,\tau_h \in W_h,\label{dg_form1}\\
&\int_\Omega (p_h \cdot \nabla_h v_h + u_h v_h)\, dx = \int_\Omega f\,v_h dx 
+ \sum_{K\in \mathcal{T}_h}\int_{\partial K}\hat{p_h}\cdot n_K v_h ds &\forall\,v_h\in V_h,
\label{dg_form2}
\end{align}
where $\hat{u_h}$ and $\hat{p_h}$ are numerical fluxes. Different
choices of the numerical fluxes lead to different DG methods. The
following theorem (see \cite{carstensen09}) holds for the LDG
method and other methods discussed in \cite{arnold02}.

\begin{theorem}\label{carstensen09}
Assume $u\in H^1_{\Gamma_1}(\Omega)$ and $p\in W:=[L^2(\Omega)]^2$
are the solution of the problem  $(\ref{Pois_first})$, and $u_h\in
V_h$ and $p_h\in W_h$ are the solution of the problem
$(\ref{dg_form1})$--$(\ref{dg_form2})$. Then,
\[\|p-p_h\| \leq C \left(\eta_* + \zeta_*\right),\]
where
\begin{align*}
\eta^2_* & := \sum_{K\in \mathcal{T}_h} h_K^2 \|{\rm div} p_h -u_h +f\|^2_K
+ \sum_{e\in {\cal E}_h^i} h_e \|[p_h]\|^2_e + \sum_{e\in {\cal E}_h^2} h_e \|p_h\cdot n-g\|^2_e, \\
\zeta^2_* &:= \sum_{e\in {\cal E}_h^0} h_e^{-1} \|[u_h]\|^2_e
\end{align*}
and $C$ is a mesh-size independent constant which depends only on
the domain $\Omega$ and the minimal angle condition.
\end{theorem}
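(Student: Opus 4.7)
My plan is to prove the bound via a Helmholtz decomposition of the $L^2$-vector field $p-p_h$, which cleanly separates the contribution of the PDE residual (yielding $\eta_*$) from the contribution of the non-conformity of $u_h$ (yielding $\zeta_*$). In two dimensions, I would write
\[
p - p_h \;=\; \nabla\psi \;+\; {\rm curl}\,\varphi,
\]
where $\psi\in H^1_{\Gamma_1}(\Omega)$ and $\varphi\in H^1(\Omega)$ is chosen so that the natural boundary conditions on $\Gamma_2$ make the two components $L^2$-orthogonal. This gives $\|p-p_h\|^2 = \|\nabla\psi\|^2 + \|{\rm curl}\,\varphi\|^2$, so it suffices to bound each piece separately.

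For the gradient part, I would test with $\psi$ itself: $\|\nabla\psi\|^2 = \int_\Omega(p-p_h)\cdot\nabla\psi\,dx$. Using $p=\nabla u$ and the weak formulation of (\ref{Pois_first}) for $u$, the contribution of $p$ becomes $\int_\Omega(f-u)\psi\,dx + \int_{\Gamma_2}g\psi\,ds$. For the $p_h$ contribution, I would integrate by parts element-wise, producing the volume residual $f-u_h+\nabla_h\cdot p_h$ on each $K$, the edge jumps $[p_h]$ on interior edges, and the boundary residual $p_h\cdot n - g$ on $\Gamma_2$ (with $\psi$ vanishing on $\Gamma_1$). Subtracting the DG equation (\ref{dg_form2}) tested against a Cl\'ement quasi-interpolant $\psi_I\in V_h\cap H^1_{\Gamma_1}(\Omega)$ introduces $h_K$ and $h_e^{1/2}$ factors through the standard local approximation estimates $\|\psi-\psi_I\|_K\lesssim h_K\|\nabla\psi\|_{\omega_K}$ and $\|\psi-\psi_I\|_e\lesssim h_e^{1/2}\|\nabla\psi\|_{\omega_e}$. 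Writing $u-u_h = (u-\psi_I)+(\psi_I-u_h)\cdot 0$-contribution and absorbing a factor $\|\nabla\psi\|$, I expect $\|\nabla\psi\|\lesssim\eta_*$.

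For the curl part, I would use the orthogonality $\int_\Omega\nabla u\cdot{\rm curl}\,\varphi\,dx = 0$ (valid with the boundary conditions chosen for $\varphi$) to reduce to $\|{\rm curl}\,\varphi\|^2 = -\int_\Omega p_h\cdot{\rm curl}\,\varphi\,dx$. Here is where the jumps enter: I would introduce the conforming enrichment $u_h^c\in V_h\cap H^1_{\Gamma_1}(\Omega)$ of $u_h$ via Lemma \ref{lem:conform}, so that $\int_\Omega\nabla u_h^c\cdot{\rm curl}\,\varphi\,dx = 0$ and
\[
\|{\rm curl}\,\varphi\|^2 \;=\; \int_\Omega(\nabla u_h^c - p_h)\cdot{\rm curl}\,\varphi\,dx.
\]
Using the first DG equation (\ref{dg_form1}) and integration by parts to rewrite $p_h$ in terms of $u_h$ and its numerical flux $\widehat{u_h}$, the discrepancy $\nabla_h u_h - p_h$ is driven by the jumps $[u_h]$ across interior edges and on $\Gamma_1\cup\Gamma_2$; together with the enrichment estimate (\ref{conform}) for $i=1$, this produces $\|\nabla u_h^c - p_h\|\lesssim\zeta_*$, and Cauchy--Schwarz closes the estimate by $\|{\rm curl}\,\varphi\|\lesssim\zeta_*$.

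The main obstacle I expect is the bookkeeping around the numerical fluxes $\widehat{u_h},\widehat{p_h}$: obtaining clean cancellations in the gradient step (so that the stabilization and lifting terms reassemble into interior jumps $[p_h]$ and boundary residuals $p_h\cdot n-g$) and, in the curl step, quantifying the distance from $p_h$ to a discrete gradient purely in terms of the jumps $[u_h]$. The structure theorem for LDG fluxes of \cite{arnold02} together with the enrichment estimate of Lemma \ref{lem:conform} are exactly the tools needed to overcome both difficulties.
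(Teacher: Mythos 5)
You should first note that the paper does not actually prove this theorem: it is quoted from \cite{carstensen09}, with only the remark that extending the homogeneous-Dirichlet Poisson analysis there to the present mixed-boundary reaction--diffusion problem is easy, so there is no in-paper proof to compare against. Your Helmholtz-decomposition strategy is the classical route behind estimates of this type (an equilibrium-residual part plus a nonconformity part measured by the distance of $p_h$ to conforming gradients), and your treatment of the curl component is essentially sound: with $\varphi$ chosen locally constant on $\Gamma_2$ so that $\int_\Omega\nabla v\cdot\mathrm{curl}\,\varphi\,dx=0$ for all $v\in H^1_{\Gamma_1}(\Omega)$, the conforming enrichment of Lemma \ref{lem:conform} together with the flux-structure bound $\|p_h-\nabla_h u_h\|\lesssim\zeta_*$ (exactly the ingredient used in the proof of Corollary \ref{post_pois}) gives $\|\mathrm{curl}\,\varphi\|\lesssim\zeta_*$.

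The genuine gap is in the gradient part, and it comes from the zeroth-order term. Testing with $\nabla\psi$ and using the exact weak form of \eqref{Pois_first} produces the volume residual $f-u+\mathrm{div}\,p_h$, whereas $\eta_*$ contains $f-u_h+\mathrm{div}\,p_h$; the discrepancy leaves the term $\int_\Omega(u_h-u)\psi\,dx$, and your phrase ``$(u-\psi_I)+(\psi_I-u_h)\cdot 0$-contribution'' does not dispose of it. The crude repair fails: bounding it by $\|u-u_h\|\,\|\psi\|$, then $\|\psi\|\lesssim\|\nabla\psi\|$ and $\|u-u_h\|\lesssim\|p-p_h\|+\zeta_*$ (Friedrichs plus the enrichment and flux bounds) only yields $\|\nabla\psi\|\lesssim\eta_*+\zeta_*+\|p-p_h\|$, which is circular since the generic constant is not below one and nothing can be absorbed. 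To close the argument you need to control the $L^2$-error and the flux error together; the standard fix is to drop the pure Helmholtz split for this component and instead bound the coercive quantity $a(u-u_h^c,u-u_h^c)$, where $u_h^c$ is the conforming enrichment of $u_h$, by testing the residual with $u-u_h^c$ (Cl\'ement interpolation, the DG equation \eqref{dg_form2} for the Galerkin-type orthogonality, LDG flux and penalty terms absorbed into $\zeta_*$), and then conclude with the triangle inequality using $\|u_h-u_h^c\|_{1,h}\lesssim\zeta_*$ and $\|p_h-\nabla_h u_h\|\lesssim\zeta_*$. With that correction, and granting the (standard but not free) mixed-boundary Helmholtz decomposition and the flux bookkeeping you defer to \cite{arnold02}, your outline can be completed.
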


From the relation between $p_h$ and $u_h$
(\cite{arnold02,carstensen09}), we deduce the following result.

\begin{corollary}\label{post_pois}
With the same notation as in Theorem \ref{carstensen09}, we have
\[\|\nabla u-\nabla_h u_h\| \leq C (\eta + \zeta_*),\]
where
\begin{align*}
\eta^2 & := \sum_{K\in \mathcal{T}_h} h_K^2 \|\Delta u_h - u_h +
f\|^2_K + \sum_{e\in {\cal E}_h^i} h_e \|[\nabla_h u_h]\|^2_e +
\sum_{e\in {\cal E}_h^2} h_e \|\nabla_h u_h\cdot n-g\|^2_e.
\end{align*}
\end{corollary}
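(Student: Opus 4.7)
The plan is to pass through the auxiliary discrete flux $p_h$ and lean on Theorem \ref{carstensen09}. By the triangle inequality,
\[
\|\nabla u - \nabla_h u_h\| \le \|\nabla u - p_h\| + \|p_h - \nabla_h u_h\|,
\]
and Theorem \ref{carstensen09} bounds the first term by $C(\eta_* + \zeta_*)$. So the task reduces to two things: (i) controlling $\|p_h - \nabla_h u_h\|$ by $\zeta_*$, and (ii) upgrading the estimator $\eta_*$ (written in terms of $p_h$) to $\eta$ (written in terms of $\nabla_h u_h$) at the price of an extra $\zeta_*$ term.

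For (i), I would start from the LDG flux equation \eqref{dg_form1}. Testing against $\tau_h \in W_h$ and doing element-wise integration by parts converts $-\int_\Omega u_h\, \nabla_h \cdot \tau_h$ into $\int_\Omega \nabla_h u_h \cdot \tau_h$ plus interface terms of the form $\sum_K \int_{\partial K}(\hat{u}_h - u_h)\,n_K \cdot \tau_h$. Using the LDG choice $\hat{u}_h = \{u_h\} + \beta\cdot[u_h]$ on $\mathcal{E}_h^i$ together with the homogeneous choice on $\Gamma_1$, the usual jump/average manipulation recasts these interface terms into the exact form of the lifting operators defined in \eqref{liftop}. This yields the identity
\[
p_h \;=\; \nabla_h u_h + r_0([u_h]) + l(\beta\cdot[u_h]) \qquad \text{in } W_h.
\]
Standard $L^2$ stability of the lifting operators (see \cite{arnold02,carstensen09}) gives $\|r_0([u_h])\|^2 + \|l(\beta\cdot[u_h])\|^2 \lesssim \sum_{e\in\mathcal{E}_h^0} h_e^{-1}\|[u_h]\|_e^2 = \zeta_*^2$, hence $\|p_h - \nabla_h u_h\| \lesssim \zeta_*$.

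For (ii), I would use this identity term by term in $\eta_*$. Since $u_h$ is piecewise linear, $\nabla_h u_h$ is piecewise constant and $\Delta u_h = 0$, so on each $K$ one has $\mathrm{div}\,p_h = \mathrm{div}(p_h - \nabla_h u_h)$, and an elementwise inverse estimate gives $h_K\|\mathrm{div}(p_h - \nabla_h u_h)\|_K \lesssim \|p_h - \nabla_h u_h\|_K$. Summation, combined with $\|p_h - \nabla_h u_h\| \lesssim \zeta_*$, shows that the volume piece of $\eta_*$ is controlled by the volume piece of $\eta$ plus $\zeta_*^2$. The interior edge piece is handled analogously: a trace inequality followed by an inverse estimate yields $\sum_{e\in\mathcal{E}_h^i} h_e\|[p_h - \nabla_h u_h]\|_e^2 \lesssim \|p_h - \nabla_h u_h\|^2 \lesssim \zeta_*^2$, and the Neumann boundary piece on $\mathcal{E}_h^2$ is treated the same way after adding and subtracting $\nabla_h u_h\cdot n$. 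Collecting everything gives $\eta_* \lesssim \eta + \zeta_*$, and combining with Theorem \ref{carstensen09} and the bound on $\|p_h - \nabla_h u_h\|$ completes the proof.

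The main obstacle in executing this plan is establishing the precise lifting identity $p_h = \nabla_h u_h + r_0([u_h]) + l(\beta\cdot[u_h])$, since it depends on the specific choice of numerical fluxes in the LDG method and on rewriting the interface integrals $\sum_K\int_{\partial K}(\hat{u}_h - u_h)\,n_K\cdot\tau_h$ in the canonical $\{\cdot\}$–$[\cdot]$ form matching \eqref{liftop}. Once the identity is in hand, the remaining inverse and trace estimates are routine scaling arguments that do not depend on the variational inequality structure.
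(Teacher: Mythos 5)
Your proposal is correct and follows essentially the same route as the paper: triangle inequality through $p_h$, Theorem \ref{carstensen09} for $\|\nabla u - p_h\|$, the identity $p_h = \nabla_h u_h + r_0([u_h]) + l(\beta\cdot[u_h])$ (the paper simply quotes it from \cite[(3.9)]{arnold02} and Table 3.1 rather than rederiving it) with $L^2$ stability of the lifting operators to get $\|p_h-\nabla_h u_h\|\lesssim \zeta_*$, and inverse/trace estimates to show $\eta_* \lesssim \eta + \zeta_*$. No substantive differences.
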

\begin{proof}
By \cite[Lemma 7.2]{schotzau03},
\begin{align*}
\|r_0([v_h])\|^2 &\leq C \sum_{e\in {\cal E}_h^0} h_e^{-1}
\|[v_h]\|^2_e,\quad \quad \|l(\beta\cdot [v_h])\|^2 \leq C
\sum_{e\in {\cal E}_h^i} h_e^{-1} \|[v_h]\|^2_e, \quad\quad
\forall\, v_h\in V_h.
\end{align*}
From \cite[(3.9)]{arnold02}, we know that
\begin{align*}
p_h = \nabla_h u_h - r_0([\hat{u_h}-u_h]) - l(\{\hat{u_h}-u_h\}).
\end{align*}
Then
\begin{align*}
\|\nabla u-\nabla_h u_h\| &\leq \|\nabla u-p_h\| + \|p_h -\nabla_h
u_h\|\\ & \leq C \left(\eta_* + \zeta_*\right) +
\|r_0([\hat{u_h}-u_h])\| + \|l(\{\hat{u_h}-u_h\})\|.
\end{align*}
From the choices of numerical fluxes $\hat{u_h}$ in Table 3.1 of
\cite{arnold02}, we have
\begin{align*}
[\hat{u_h}-u_h] & = -[u_h]\ {\rm or}\ 0, \quad\quad
\{\hat{u_h}-u_h\} = - \beta\cdot [u_h] \ {\rm or}\ 0.
\end{align*}
So
\begin{align*}
\|r_0([\hat{u_h}-u_h])\| \leq C \sum_{e\in {\cal E}_h^0} h_e^{-1}
\|[u_h]\|^2_e,\quad \quad \|l(\{\hat{u_h}-u_h\})\| \leq C
\sum_{e\in {\cal E}_h^i} h_e^{-1} \|[u_h]\|^2_e,
\end{align*}
which implies
\begin{align*}
\|p_h -\nabla_h u_h\|\leq \zeta_* \quad {\rm and} \quad \|\nabla
u-\nabla_h u_h\| & \leq C \left(\eta_* + \zeta_*\right).
\end{align*}
Finally, by the inverse inequality and trace inequality, we get
\begin{align*}
\eta^2_* & = \sum_{K\in \mathcal{T}_h} h_K^2 \|{\rm div} p_h - u_h
+f\|^2_K + \sum_{e\in {\cal E}_h^i} h_e \|[p_h]\|^2_e + \sum_{e\in {\cal E}_h^2} h_e \|p_h\cdot n-g\|^2_e\\
& \leq 2\left( \eta^2 + \sum_{K\in \mathcal{T}_h} h_K^2 \|{\rm
div} (p_h - \nabla u_h) \|^2_K + \sum_{e\in {\cal
E}_h^i} h_e \|[p_h - \nabla_h u_h]\|^2_e + \sum_{e\in {\cal E}_h^2} h_e \|(p_h-\nabla_h u_h) \cdot n\|^2_e\right)\\
& \leq 2\eta^2 + 2 \sum_{K\in \mathcal{T}_h} h_K^2 \|{\rm div}
(p_h - \nabla u_h) \|^2_K + C \left(\sum_{K\in \mathcal{T}_h}
\|p_h - \nabla u_h \|^2_K + \sum_{K\in \mathcal{T}_h} h_K^2 |p_h -
\nabla u_h |^2_{1,K} \right)\\
& \leq 2\eta^2 + C \sum_{K\in \mathcal{T}_h} \|p_h - \nabla u_h
\|^2_K = 2\eta^2 + C \|p_h - \nabla_h u_h \|^2 \leq 2\eta^2 + C
\zeta_*^2.
\end{align*}
Therefore, $\eta_* \leq C \left(\eta +\zeta_*\right)$ and the
result is proved.
\end{proof}

Define the interior residuals and edge-based jumps
\begin{align*}
R_K&:=\Delta u_h - u_h +f  \quad{\rm for\ each}\ K\in\mathcal{T}_h,\\
R_e&:=[\nabla_h u_h]  \quad{\rm for\ each}\ e \in {\cal E}_h^i,
\quad R_e:=\nabla_h u_h\cdot n +g\lambda_h  \quad{\rm for\ each}\ e
\in {\cal E}_h^2.
\end{align*}
Then the local estimators are
\begin{align}\label{local_esti}
\eta_K&:= \left(h_K^2 \|R_K\|^2_K + \frac{1}{2}\sum_{e\in \partial
K\cap {\cal E}_h^i}h_e\|R_e\|^2_e + \sum_{e\in \partial
K\cap {\cal E}_h^2}h_e\|R_e\|^2_e\right)^{1/2},\\
\label{local_edge}
\eta_{\partial K}&:=\Big(\frac{1}{2}\sum_{e\in \partial K\cap {\cal E}_h^i}
h_e^{-1}\|[u_h]\|^2_e + \sum_{e\in \partial K\cap {\cal E}_h^1}
h_e^{-1}\|[u_h]\|^2_e\Big)^{1/2}.
\end{align}
Applying Corollary \ref{post_pois} to $\|u_h-z\|_{1,h}$, we obtain
from (\ref{post_obst})
\begin{align}
\|e\|_{1,h} + |\lambda-\lambda_h|_{*,h} & \lesssim  \left(\sum_{K\in\mathcal{T}_h}\eta_K^2 +
\sum_{K\in\mathcal{T}_h}\eta_{\partial K}^2 \right)^{1/2}. \label{err2}
\end{align}

\begin{theorem}\label{theorem1}
Let $u\in H^2(\Omega)$ and $u_h$ solve $(\ref{vi})$ and
$(\ref{dvi})$ respectively. Then we have the bound \eqref{err2}.
\end{theorem}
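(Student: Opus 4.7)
The plan is to assemble the theorem directly from the two building blocks already developed in the excerpt: the bridge inequality \eqref{bridge}, which transfers the variational-inequality error into an error for the auxiliary linear problem plus a jump term, and Corollary \ref{post_pois}, which gives a residual-type a posteriori estimate for that linear problem. Since \eqref{bridge} already contributes $\bigl(\sum_{e\in{\cal E}_h^0} h_e^{-1}\|[u_h]\|_e^2\bigr)^{1/2}$, which up to a factor is $\bigl(\sum_K \eta_{\partial K}^2\bigr)^{1/2}$ by the definition \eqref{local_edge}, the remaining task is to control $\|u_h-z\|_{1,h}$ by $\bigl(\sum_K\eta_K^2\bigr)^{1/2}$ modulo the same jump term.

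To this end I would first recall that $z\in V$ solves the linear boundary value problem \eqref{pde2} with Neumann data $\partial z/\partial n = -g\lambda_h$ on $\Gamma_2$, and $u_h\in V_h$ is precisely its DG approximation via \eqref{eq2}. Corollary \ref{post_pois} therefore applies to the pair $(z,u_h)$, with the ``$g$'' of the corollary replaced by $-g\lambda_h$. Under this identification, the boundary contribution $\|\nabla_h u_h\cdot n-g\|_e$ appearing in $\eta$ becomes exactly $\|\nabla_h u_h\cdot n+g\lambda_h\|_e=\|R_e\|_e$ for $e\in{\cal E}_h^2$, while the interior contributions $\|\Delta u_h-u_h+f\|_K$ and $\|[\nabla_h u_h]\|_e$ match $\|R_K\|_K$ and $\|R_e\|_e$ on $e\in{\cal E}_h^i$. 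Summing element by element, and using that each interior edge belongs to exactly two patches, gives $\eta^2\le 2\sum_K\eta_K^2$, while the $\zeta_*$ piece is absorbed into the jump term already present on the right of \eqref{bridge}.

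The one nontrivial point, which I expect to be the main obstacle, is that Corollary \ref{post_pois} is stated for the broken gradient norm $\|\nabla u-\nabla_h u_h\|$, whereas \eqref{bridge} is phrased in the full broken norm $\|\cdot\|_{1,h}$ that includes the $L^2$ contribution. I would resolve this by noting that the underlying DG form $B_h$ contains the reaction term $(u,v)$ and that the Carstensen--Gudi--Jensen framework of \cite{carstensen09} extends to reaction-diffusion problems with mixed boundary conditions with no change in the residual structure, so the same estimator also bounds $\|u_h-z\|_{0,h}$. Once this extension is in hand, substituting into \eqref{bridge} and collecting the contributions from every $K\in\mathcal{T}_h$ yields exactly \eqref{err2}, which is the claim. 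Since the hypothesis $u\in H^2(\Omega)$ is not explicitly used in this chain, it enters only implicitly via the validity of the Lagrange multiplier formulation \eqref{lvi}--\eqref{lag_1} that underlies the bridge identity.
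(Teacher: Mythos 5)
Your proposal follows essentially the same route as the paper: the bound \eqref{err2} is obtained exactly by combining the bridge inequality \eqref{bridge} with Corollary \ref{post_pois} applied to the auxiliary linear problem \eqref{pde2} (with boundary datum $-g\lambda_h$), the residual terms matching $R_K$ and $R_e$ and the jump term matching $\eta_{\partial K}$. Your explicit remark about the gap between the gradient-seminorm bound of Corollary \ref{post_pois} and the full norm $\|u_h-z\|_{1,h}$, resolved by the extension of the framework of \cite{carstensen09} to the reaction--diffusion problem with mixed boundary conditions, is in fact a point the paper passes over silently, so your treatment is if anything slightly more careful on the same path.
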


\section{Efficiency of the estimators}
\setcounter{equation}0

Now we consider lower bounds of the estimators. We follow the
standard argument of lower bounds of residual error estimators for
elliptic problems, see \cite[pp.\ 28--31]{ainsworth00}. First, we
introduce the bubble functions. Let $K \in \mathcal{T}_h$, and let
$\lambda_1$, $\lambda_2$ and $\lambda_3$ be the barycentric
coordinates on $K$. Then the interior bubble function $\varphi_K$
is defined by
\[\varphi_K = 27\lambda_1\lambda_2\lambda_3\]
and the three edge bubble functions are given by
\[\tau_1=4\lambda_2\lambda_3, \quad \tau_2=4\lambda_1\lambda_3,
  \quad \tau_3=4\lambda_1\lambda_2.\]
We list properties of bubble functions stated in Theorems 2.2 and
2.3 of \cite{ainsworth00} in the form of a lemma.

\begin{lemma}\label{lemma_bub}
For each $K \in {\cal T}_h$, $e \subset \partial K$, let
$\varphi_K$ and $\tau_e$ be the corresponding interior and edge
bubble functions. Let $P(K) \subset H^1(K)$ and $P(e)\subset
H^1(e)$ be finite-dimensional spaces of functions defined on $K$
or $e$. Then there exists a constant $C$ independent of $h_K$ such that for all $v\in
P(K)$,
\begin{align}
C^{-1}\|v\|^2_{K} \leq \int_K \varphi_K v^2 \, dx &\leq
C\|v\|^2_{K},\label{bub1}\\
C^{-1}\|v\|_{K} \leq \|\varphi_K v\|_{K} + h_K |\varphi_K
v|_{1,K} &\leq C\|v\|_{K},\label{bub2}\\
C^{-1}\|v\|^2_{e} \leq \int_e \tau_e v^2 \, ds &\leq
C\|v\|^2_{e},\label{bub3}\\
h_K^{-1/2}\|\tau_e v\|_{K} + h_K^{1/2} |\tau_e v|_{1,K} &\leq
C\|v\|_{e}.\label{bub4}
\end{align}
\end{lemma}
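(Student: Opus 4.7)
\textbf{Proof proposal for Lemma \ref{lemma_bub}.}

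The plan is the classical scaling/norm-equivalence argument: transport everything to a fixed reference element, use equivalence of norms on finite-dimensional spaces, then scale back and track powers of $h_K$ through the Jacobians. Let $F_K:\hat{K}\to K$ be the affine map from the reference triangle, with $|\det DF_K|\sim h_K^d$ and $\|DF_K\|, \|DF_K^{-1}\|^{-1}\sim h_K$ by the minimal-angle condition. Under $F_K$, the barycentric coordinates pull back to the barycentric coordinates on $\hat{K}$, so $\varphi_K\circ F_K = \hat{\varphi}$ and $\tau_e\circ F_K = \hat{\tau}_{\hat{e}}$ are fixed functions independent of $K$. Likewise, the finite-dimensional space $P(K)$ pulls back to a fixed finite-dimensional space $\hat{P}$ on $\hat{K}$.

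For (\ref{bub1}) and (\ref{bub2}), I would observe that on the reference element the maps $\hat{v}\mapsto \bigl(\int_{\hat{K}}\hat{\varphi}\hat{v}^2\,d\hat{x}\bigr)^{1/2}$, $\hat{v}\mapsto \|\hat{\varphi}\hat{v}\|_{L^2(\hat{K})}$, $\hat{v}\mapsto |\hat{\varphi}\hat{v}|_{H^1(\hat{K})}$ and $\hat{v}\mapsto \|\hat{v}\|_{L^2(\hat{K})}$ are all norms on $\hat{P}$ (the first three genuinely so, since $\hat{\varphi}>0$ in the interior and vanishes only on $\partial\hat{K}$, so $\hat{\varphi}\hat{v}\equiv 0$ on $\hat{K}$ forces $\hat{v}\equiv 0$). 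By equivalence of norms on a finite-dimensional space they are mutually equivalent with constants depending only on $\hat{P}$. Pulling back with the Jacobian, $\|v\|_K^2 \sim h_K^d\|\hat{v}\|_{L^2(\hat{K})}^2$ and $|v|_{1,K}^2 \sim h_K^{d-2}|\hat{v}|_{H^1(\hat{K})}^2$; the $h_K$-factors cancel on both sides of (\ref{bub1}) and produce exactly the $h_K$ weight on the gradient term in (\ref{bub2}).

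For (\ref{bub3}) the same idea is run on the edge: restrict to $\hat{e}\subset\partial\hat{K}$, use that $\hat{v}\mapsto \bigl(\int_{\hat{e}}\hat{\tau}_{\hat{e}}\hat{v}^2\,d\hat{s}\bigr)^{1/2}$ and $\|\hat{v}\|_{L^2(\hat{e})}$ are equivalent norms on $\hat{P}(\hat{e})$, and scale back using $\|v\|_e^2\sim h_K^{d-1}\|\hat{v}\|_{L^2(\hat{e})}^2$. For (\ref{bub4}) the function $v\in P(e)$ first has to be extended to $K$; the natural extension is to pull back to $\hat{e}$, extend to $\hat{K}$ as a polynomial (for instance, constant along the altitude from the opposite vertex), and push forward. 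On $\hat{K}$ the norms $\|\hat{\tau}_{\hat{e}}\hat{v}\|_{L^2(\hat{K})}$ and $|\hat{\tau}_{\hat{e}}\hat{v}|_{H^1(\hat{K})}$ are controlled by $\|\hat{v}\|_{L^2(\hat{e})}$ by the same norm-equivalence argument; scaling back introduces $h_K^{d/2}$ on the volumetric side and $h_K^{(d-1)/2}$ on the edge side, which rearranges to the asserted $h_K^{-1/2}$ and $h_K^{1/2}$ weights.

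The main obstacle is (\ref{bub4}): one must be precise about what $\tau_e v$ means when $v$ is given only on $e$, i.e., specify a bounded extension operator, and then verify the scaling. The first three estimates reduce mechanically to norm equivalence on $\hat{P}$, whereas (\ref{bub4}) additionally requires that the extension be polynomial (to keep the finite-dimensional space fixed) and that its $H^1(\hat{K})$ norm be controlled by the edge $L^2$ norm. Once the extension is fixed, everything becomes a routine tracking of Jacobian powers.
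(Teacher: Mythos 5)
Your proposal is correct, but note that the paper does not prove this lemma at all: it is quoted verbatim from Theorems 2.2 and 2.3 of the Ainsworth--Oden book cited as \cite{ainsworth00}, where the proof is exactly the scaling argument you describe (affine pull-back to the reference element, equivalence of norms on the fixed finite-dimensional spaces $\hat{P}(\hat{K})$ and $\hat{P}(\hat{e})$, a polynomial extension from $\hat{e}$ to $\hat{K}$ for \eqref{bub4}, and bookkeeping of Jacobian powers). So your argument matches the standard source proof, and the only point needing care — the choice of a bounded polynomial extension in \eqref{bub4} — is one you have correctly identified and resolved.
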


Denote
\[ a_K(u,v)=\int_K (\nabla u\cdot\nabla v + uv)\,dx. \]
Then for $u,v\in H^1(\Omega)$,
\[ a(u,v)=\sum_{K\in{\cal T}_h} a_K(u,v).\]
For all $v\in H^1_{\Gamma_1}(\Omega)$, noting that $[v]=0$ and
$[u-z]=0$ on $e\in {\cal E}_h^0$, we have
\begin{align}
\sum_{K\in{\cal T}_h} a_K(e,v)&=\sum_{K\in{\cal T}_h}
a_K(u_h-z,v)+a(z-u,v)
  =\sum_{K\in{\cal T}_h} a_K(u_h-z,v)+B_h(z-u,v) \nonumber\\
& = \sum_{K\in \mathcal{T}_h} \int_K \big(\nabla
(u_h-z)\cdot\nabla v
+(u_h-z)v\big)\, dx+\int_{\Gamma_2} g(\lambda-\lambda_h)v\,ds \nonumber\\
& = \sum_{K\in \mathcal{T}_h} \int_K \big(-\Delta (u_h-z) + u_h -z
\big)v \, dx+
\sum_{K\in \mathcal{T}_h} \int_{\partial K} \nabla(u_h-z)\cdot n_K v \, ds\nonumber\\
&\quad +\int_{\Gamma_2} g(\lambda-\lambda_h)v\,ds \nonumber\\
& = \sum_{K\in \mathcal{T}_h} \int_K (-\Delta u_h + u_h -f)v \,
dx+ \sum_{e\in {\cal E}_h^i} \int_{e} [\nabla u_h]\cdot v \, ds\nonumber\\
&\quad + \sum_{e\in {\cal E}_h^2} \int_{e} (\nabla u_h \cdot n +
g\lambda_h)  v \, ds + \int_{{\cal E}_h^2}
g(\lambda-\lambda_h)v\,ds. \label{ridu}
\end{align}
For each $K \in \mathcal{T}_h$, $\varphi_K$ and $\tau_e$ are
respectively the interior and edge bubble functions on $K$ or
$e\in {\cal E}_h^i \cup {\cal E}_h^2$. $\bar{R}_K$ is an
approximation to the interior residual $R_K$ from a suitable
finite-dimensional subspace. In (\ref{ridu}), choose
$v=\bar{R}_K\varphi_K$ on element $K$.  We know $\varphi_K$
vanishes on the boundary of $K$ by its definition, so $v$ can be
extended to be zero on the rest of domain as a continuous
function. Therefore, we get
\[a_K(e,\bar{R}_K\varphi_K)=\int_K R_K\bar{R}_K\varphi_K\, dx.\] Then
\[\int_K \bar{R}_K^2 \varphi_K\,dx=\int_K \bar{R}_K (\bar{R}_K -R_K)\varphi_K\,dx
+ a_K(e,\bar{R}_K\varphi_K).\] Applying the Cauchy-Schwarz
inequality and Lemma \ref{lemma_bub}, we obtain
\begin{align*}
\int_K \bar{R}_K (\bar{R}_K -R_K)\varphi_K \, dx &\leq \|\bar{R}_K
\varphi_K \|_{K} \|\bar{R}_K -R_K \|_{K} \lesssim \|\bar{R}_K
\|_{K} \|\bar{R}_K -R_K \|_{K}, \\
a_K(e,\bar{R}_K \varphi_K) &\leq \|e\|_{1,K} \|\bar{R}_K \varphi_K
\|_{1,K} \lesssim h_K^{-1}\|e\|_{1,K} \|\bar{R}_K \|_{K}.
\end{align*}
Use Lemma \ref{lemma_bub} again,
\[\|\bar{R}_K \|_{K}^2 \lesssim \int_K \bar{R}_K^2 \varphi_K dx.\]
Combining the above relations, we obtain
\[\|\bar{R}_K \|_{K} \lesssim\|\bar{R}_K -R_K \|_{K} + h_K^{-1}\|e\|_{1,K}.\]
Finally, by the triangle inequality $\|R_K \|_{K} \leq
\|R_K-\bar{R}_K \|_{K} + \|\bar{R}_K \|_{K}$, we get
\[\|R_K \|_{K} \lesssim\|\bar{R}_K -R_K \|_{K} + h_K^{-1}\|e\|_{1,K}.\]
Now choose the finite-dimensional subspace from which the
$\bar{R}_K$ come as the function space spanned by the local nodal
basis $\phi_K^{(i)}$ with $i = 1, 2, 3$.  Then, $\|\bar{R}_K -R_K
\|_{K}$ reduces to $\|f - \overline{f} \|_{K}$ where we take
\begin{equation}\label{f_h}
\overline{f} = \sum_{i=1}^3 f^i \phi_K^{(i)} \quad\quad {\rm with} \quad
f^i = (f,\phi_K^{(i)})_K/(1,\phi_K^{(i)})_K .
\end{equation}

For $e\in {\cal E}_{h}^2$, we obtain
\begin{align*}
a_{\omega_e}(u_h-u,\overline{R}_e \tau_e)&= \int_{\omega_e} R_K\overline{R}_e \tau_e dx
+ \int_{e} R_e\overline{R}_e \tau_e ds + \int_{e} g(\lambda- \lambda_h)\overline{R}_e \tau_e ds
\end{align*}
and therefore
\begin{align*}
\int_{e} \overline{R}_e^2 \tau_e ds 
& = \int_{e} \overline{R}_e (\overline{R}_e-R_e) \tau_e  ds +
a_{\omega_e}(u_h-u,\overline{R}_e \tau_e)\\
&{}\quad - \int_{\omega_e} R_K\overline{R}_e \tau_e dx
- \int_{e} g(\lambda- \lambda_h)\overline{R}_e \tau_e ds.
\end{align*}
From Lemma \ref{lemma_bub}, we estimate the terms in above
relation as
\begin{align*}
C^{-1}\|\overline{R}_e\|_e^2 & \leq \int_{e} \overline{R}_e^2
\tau_e \, ds,\\
\int_{e}
 \overline{R}_e (\overline{R}_e-R_e) \tau_e \, ds & \leq \|\overline{R}_e
 \tau_e\|_e \|\overline{R}_e-R_e\|_e \leq C \|\overline{R}_e
\|_e \|\overline{R}_e-R_e\|_e,\\
a_{\omega_e}(u_h-u,\overline{R}_e \tau_e) & \leq
\|u_h-u\|_{1,\omega_e} \|\overline{R}_e \tau_e\|_{1,\omega_e} \leq
C h_e^{-1/2} \|u_h-u\|_{1,\omega_e} \|\overline{R}_e\|_{e},\\
\int_{\omega_e} R_K \overline{R}_e \tau_e \, dx & \leq
\|R_K\|_{\omega_e} \|\overline{R}_e \tau_e\|_{\omega_e} \leq
C h_e^{1/2} \|R_K\|_{\omega_e} \|\overline{R}_e\|_{e},\\
\int_{e} g(\lambda- \lambda_h) \overline{R}_e \tau_e \, ds & \leq
|\lambda- \lambda_h|_{*,e} \|\overline{R}_e \tau_e\|_{1,\omega_e}
\leq C h_e^{-1/2} |\lambda- \lambda_h|_{*,e}
\|\overline{R}_e\|_{e}.
\end{align*}
Hence, we obtain
\begin{align}
\|R_e \|_{e}&\leq \|\overline{R}_e \|_{e} + \|R_e-\overline{R}_e
\|_{e}\nonumber\\ &\leq C \big(h_e^{-1/2}\|u_h-u\|_{1,\omega_e} +
h_e^{-1/2}|\lambda-\lambda_h|_{*,e}
   +h_e^{1/2}\|R_K-\overline{R}_K\|_{\omega_e} + \|R_e-\overline{R}_e\|_e
  \big).\label{eff_Re}
\end{align}
For $e\in {\cal E}_h^i$, let $\overline{R}_e$ be an approximation
to the jump $R_e$ from a suitable finite-dimensional space and let
$v=\overline{R}_e \tau_e$ in \eqref{ridu}.  By a similar argument, we have
\[\|R_e \|_{e} \leq C \big(h_e^{-1/2}\|u_h-u\|_{1,\omega_e}
   +h_e^{1/2}\|R_K-\overline{R}_K\|_{\omega_e} + \|R_e-\overline{R}_e\|_e \big).\]

Note that $\Delta u_h + u_h$ in $K$ and $\partial u_h/
\partial n_e$ on $e$ are polynomials. Hence, the terms
$\|R_K-\overline{R}_K\|_{K}$ and $\|R_e-\overline{R}_e\|_e$ can be
replaced by $\|f-\overline{f}\|_{K}$ and
$\|\lambda_h-\overline{\lambda}_h\|_e$, with discontinuous
piecewise polynomial approximations 
$\overline{\lambda}_h$. Then we obtain the efficiency bound of the
local error indicator $\eta_{K}$.

\begin{theorem}\label{theorem3}
Let $u$ and $u_h$ be the solutions of $(\ref{vi})$
and $(\ref{dvi})$, respectively, and $\eta_K$ be the estimator
$(\ref{local_esti})$. Then
\begin{equation}\label{effic}
\eta_K \leq C \left(|u-u_h|_{\omega_K} +
\sum_{e\in {\cal E}(K)\cap {\cal E}_{2}}|\lambda-\lambda_h|_{*,e} + h_K \|f-f_h\|_{\omega_K}
+\sum_{e\in {\cal E}(K)\cap {\cal E}_{2}}
h_e\|\lambda_h-\overline{\lambda}_h\|_e^2\right),
\end{equation}
where the constant $C$ is dependent on the angle condition and
independent of $h_K$.
\end{theorem}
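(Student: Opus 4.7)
The plan is to take the three pointwise estimates already derived in the section (the interior bound just before \eqref{eff_Re}, the Neumann-edge bound \eqref{eff_Re}, and the interior-edge version stated right after), scale each by the appropriate power of $h$ so that the sum matches the definition \eqref{local_esti} of $\eta_K$, and then collect terms over the patch $\omega_K$.

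First I would multiply the interior residual bound $\|R_K\|_K \lesssim \|\bar{R}_K - R_K\|_K + h_K^{-1}\|e\|_{1,K}$ by $h_K$ and square, obtaining
\[
h_K^2\|R_K\|_K^2 \lesssim \|e\|_{1,K}^2 + h_K^2\|R_K - \bar{R}_K\|_K^2.
\]
Since $\Delta u_h - u_h$ is a polynomial on $K$, the choice of $\bar{R}_K$ from the nodal basis reduces $\|R_K - \bar{R}_K\|_K$ to $\|f - \bar{f}\|_K$, where $\bar f$ is the projection \eqref{f_h}. For each interior edge $e\in\partial K\cap \mathcal{E}_h^i$ I would multiply the interior-edge bound by $h_e^{1/2}$ and square to get
\[
h_e\|R_e\|_e^2 \lesssim \|u - u_h\|_{1,\omega_e}^2 + h_e^2\|R_K - \bar{R}_K\|_{\omega_e}^2 + h_e\|R_e - \bar{R}_e\|_e^2.
\]
Since $R_e=[\nabla_h u_h]$ is already a polynomial on $e$, $\bar{R}_e$ can be chosen to equal $R_e$, so the last term drops. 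Likewise for $e\in\partial K\cap\mathcal{E}_h^2$, multiplying \eqref{eff_Re} by $h_e^{1/2}$ and squaring gives
\[
h_e\|R_e\|_e^2 \lesssim \|u - u_h\|_{1,\omega_e}^2 + |\lambda - \lambda_h|_{*,e}^2 + h_e^2\|f - \bar{f}\|_{\omega_e}^2 + h_e\|g(\lambda_h - \bar{\lambda}_h)\|_e^2,
\]
using that here $R_e - \bar{R}_e = g(\lambda_h - \bar{\lambda}_h)$.

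Next I would add these three contributions according to the definition \eqref{local_esti} of $\eta_K^2$. Under the minimal angle condition, $h_e\sim h_K$ for every $e\subset\partial K$, and each patch $\omega_e$ with $e\in\mathcal{E}(K)$ is contained in $\omega_K$ with bounded overlap, so the sums of $\|u-u_h\|_{1,\omega_e}^2$ and $\|f-\bar f\|_{\omega_e}^2$ are absorbed into $\|u-u_h\|_{1,\omega_K}^2$ and $\|f-\bar f\|_{\omega_K}^2$ up to a constant depending only on shape regularity. Taking the square root and using $\bigl(\sum a_i^2\bigr)^{1/2}\le \sum a_i$ to separate the multiplier terms from the rest yields the bound \eqref{effic}.

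The estimate is essentially a mechanical assembly, so I do not expect a serious conceptual obstacle; the main care point will be bookkeeping. Specifically, one must verify that (i) the approximation errors $\|R_K - \bar R_K\|$ and $\|R_e - \bar R_e\|$ reduce cleanly to the stated oscillation terms in $f$ and in $\lambda_h$ only, using that $\Delta u_h - u_h$ and $[\nabla_h u_h]$ are piecewise polynomials, and (ii) the powers of $h_K$ and $h_e$ are tracked correctly when converting between the interior scaling ($h_K^2$) and the edge scaling ($h_e$), exploiting $h_e\sim h_K$ under the minimal angle condition. Once these two points are handled, the final inequality follows by standard finite-overlap arguments over $\omega_K$.
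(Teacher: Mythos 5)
Your proposal is correct and follows essentially the same route as the paper: the paper's proof of Theorem \ref{theorem3} consists precisely of the bubble-function bounds for $\|R_K\|_K$ and $\|R_e\|_e$ derived from \eqref{ridu}, the reduction of the oscillation terms to $\|f-\overline{f}\|_K$ and $\|\lambda_h-\overline{\lambda}_h\|_e$ via the polynomial structure of $\Delta u_h-u_h$ and $\nabla_h u_h$, and the scaling/summation over $K$ and $e\in{\cal E}(K)$ that you spell out, using $h_e\sim h_K$ and $\omega_e\subset\omega_K$. Note only that your assembly naturally produces the term $h_e^{1/2}\|\lambda_h-\overline{\lambda}_h\|_e$, which indicates the exponent in the last term of \eqref{effic} as printed is a typographical slip rather than a flaw in your argument.
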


\end{document}